\newtheorem{lemma}{Lemma}[section]
\newtheorem{thm}{Theorem}[section]
\newcommand{\p}{\mathbb{P}}
\newcommand{\ii}{\mathbf{i}}
\begin{document}

\title{Integral Representation of Probabilities\\
 in Kingman Coalescent}


\author{Youzhou Zhou}
\address{Department of Mathematical Science\\
Xi'an Jiaotong-Liverpool University \\
111 Renai Road\\
Suzhou, China 215 123}
\email{youzhou.zhou@xjtlu.edu.cn}
\thanks{This research is supported by NSFC: 11701570.}

\subjclass[2010]{Primary 60F10; secondary 60C05}

\keywords{Kingman Coalescent, Integral Representation, Local Central Limit Theorem}

\begin{abstract}
 Kingman Coalescent was first proposed by Kingman~\cite{MR671034} in population genetics to describe population's genealogical structure. Now it becomes a benchmark model for coalescent process. Extensive studies have been conducted on Kingman coalescent. In particular, its explicit finite time distribution was obtained by Tavar\'e~\cite{MR770050}. However, very few people use this explicit distribution to do analysis for it is an intractable infinite series.  In this article, we are going to establish a complex integral representation for the finite time distribution, then we use steepest descent method to analyze this integral representation to obtain local central limit theorem at small time regime.  
 \end{abstract}
 
 \date{\today}

\dedicatory{}

\maketitle
\allowdisplaybreaks
\section{Introduction}
Kingman coalescent is a first coalescent model in population genetics. It was first proposed by Kingman~\cite{MR671034} to describe the genealogical structure of a sample. It eventually finds its extensive applications in biology.  Also there are many generalizations of Kingman coalescent such as $\Lambda-$coalescent~\cite{MR1742892}. For recent development of coalescent processes, one can refer to \cite{MR2574323}. All these coalescent processes have a nice dual correspondence with evolution dynamics. Because if we observe the population forward in time we get evolution process, if, however, we look the population backward in time, we end up with coalescent.  Coalescing structure is ubiquitous in nature, one can refer to \cite{MR1673235} for a nice review on mathematical coalescent models in physics and chemistry. Coalescent process can be treated in different mathematical setting as well. For instance, Kingman coalescent is a Markov chain, but it can also be regarded as a random metric space by Evans in~\cite{MR1765005}. In this article, we will only regard Kingman coalescent as a Markov chain. 

To describe genealogical structure, one can use an equivalent relation defined according to whether two individuals share common parents in previous generation. Each equivalent relation will generate a partition of a sample. Therefore, it is natural to use partitions of this sample to represent its genealogical structures. Let $[n]=\{1,2,\cdots,n\}$ be a sample of size $n$. Denote $A_{n}$ to be the set of all partitions of this sample. Kingman coalescent $X_{t}^{n}$ is defined to be a continuous time Markov chain valued in $A_{n}$. If we only consider the number of blocks of partition, then we will get an integer-valued pure death process, called block-counting process and denoted as $D_{t}^{n}$. The transition rate of $D_{t}^{n}$ is 
$$
q_{n,m}=\begin{cases}
\frac{n(n+\theta-1)}{2},& m=n-1\\
-\frac{n(n+\theta-1)}{2}&  m=n\\
0& m\neq n,n-1
\end{cases}
$$
where $\theta>0$ is mutation rate. 
As $n\to\infty$, $D_{t}^{n}$ will converges to a pure death chain $D_{t}$ whose initial state is $\infty$. The finite time probability mass function of $D(t)$ is $d_{n}^{\theta}(t)=\p(D_{t}=n)$. These probabilities sever as convex coefficients of  the explicit transition function of Fleming-Viot process with parent-independent mutation \cite{MR1235429} and the ininitely-many-neutral-alleles model \cite{MR3336858}. In 1984, Tavar\'e obtained the following explicit expression of $d_{n}^{\theta}(t)$ in \cite{MR770050} 
\begin{align*}
d_{0}^{\theta}(t)=&1-\sum_{k=1}^{\infty}\frac{2k-1+\theta}{k!}(-1)^{k-1}\theta_{(k-1)}e^{-\lambda_{k}t}\\
d_{n}^{\theta}(t)=&\sum_{k=1}^{\infty}\frac{2k-1+\theta}{k!}(-1)^{k-n}\binom{k}{n}(n+\theta)_{(k-1)}e^{-\lambda_{k}t},n\geq1.\\
\lambda_{k}&=\frac{k(k+\theta-1)}{2},\theta_{(k)}=\frac{\Gamma(\theta+k)}{\Gamma(\theta)},\quad k\geq0.
\end{align*}
It has been 30 years, people only use this expression to derive ergodic inequality of Fleming-Viot process with parent-independent mutation and the ininitely-many-neutral-alleles model. 

One distinctive feature of Kingman coalescent is that it has an entrance boundary $\infty$, also termed as coming down from infinity. Therefore this process will jump to finite state immediately, and many details are hided in the small time regime. There are many studies on the small time asymptotic of Kingman coalescent~\cite{MR3342667},\cite{MR3339865},\cite{MR770713},\cite{MR3304413}. However, none of them have ever used the explicit expression of $d_{n}^{\theta}(t)$. One possible reason is that the expression of $d_{n}^{\theta}(t)$ is intractable series, and some probability tools, such as martingale, are already powerful enough to handle problems about Kingman coalescent. But if one would like to derive some fine asymptotic result, it is usually more hopeful to work with the explicit distribution~$d_{n}^{\theta}(t)$. 

 In this article, we are going to find an integral representation for probabilities $d_{n}^{\theta}(t)$. Integral representation is usually more suitable for asymptotic analysis. The idea to establish integral representation is to replace some combinatorial terms and $e^{-\lambda_{k}}$ by integrals. Cauchy integral formula and Fourier transform are tailor-made tool for this job. Then one will end up with an integral whose integrand is essentially a geometric series. Therefore, one can easily obtained a simplified integrals by summing up the geometric series. Sometimes, if possible, one also needs to do carefully residue computation to simplify the integral further. Once we have an integral representation, we will do some contour deformation and then use steepest descent method to obtain the asymptotic behavior. As an application, we obtain local central limit theorem. In \cite{MR3342667} and \cite{MR3339865}, Limic and Talarczyk obtained functional fluctuation theorem at small time regime. However, local central limit theorem is new. In \cite{MR3304413}, Depperschmidt, Pfaffelhuber and Scheuringer discussed small time large deviations for Kingman coalescent. We believe the integral representation in this article can also be used to get both large deviations and moderate deviations. 
 
 This article is planned as follows: In section 2, we will discuss the integral representation in Theorem \ref{asm} and its proof. This integral representation is essentially the same as (2.16) in \cite{2006G}.  In section 3, we will present an alternative integral representation in Theorem \ref{arep} which is more suitable for asymptotic analysis, then we use steepest descent method to obtain local central limit theorem. This alternative integral representation is brand new.  Proofs of some lemmas are left in section 4. Last, we will give some remarks in section 5. 
 
 In this article, notation ``$\sim$" will reappear many times. We say $a(t)\sim b(t)$ if $\lim_{t\to0}\frac{a(t)}{b(t)}=1$.  Notation $M_{1},M_{2}$ for constants will also show up a few times. Whenever they appear, we just regard them as a constant independent of $t$. Also $\ii$ is a reserved word for imaginary unit and $\ii^2=-1$.

\section{Integral Representation for $d_{n}^{\theta}(t)$}

 In this section, we are going to derive some integral representations for probabilities $d_{n}^{\theta}(t)$ of Kingman coalescent. Generally speaking, integral representation is usually more tractable than series expression if one want to directly use $d_{n}^{\theta}(t)$ to study small time asymptotic of Kingman coalescent.

In series expression of $d_{n}^{\theta}(t)$, there are combinatorial coefficients and $e^{-\lambda_{k}t}$. One can express them in terms of complex integrals with specifically chosen contours, then the series expansion becomes complex integral whose integrand is essentially a geometric series. Therefore, one can easily sum up this to get a simplified integrand. To rewrite combinatorial coefficients as contour integrals, one only needs to use Cauchy integral formula. Because $e^{-\lambda_{k}t}$ is of gaussian type, one can use Fourier transform to get its integral form. Last, one also need to do careful residue calculation to get the following integral representation.

\begin{thm}\label{asm}
For $n\geq0$, $d_{n}^{\theta}(t)$ has the following integral representation
\begin{align*}
d_{n}^{\theta}(t)=\binom{2n-1+\theta}{n}\frac{1}{2^{2n-1+\theta}}e^{\frac{(1-\theta)^2t}{8}}
\frac{1}{\sqrt{2\pi t}}\int_{-\infty-\ii A}^{\infty-\ii A}
e^{-\frac{w^2}{2t}}\frac{\ii\sin\frac{w}{2}}{(\cos\frac{w}{2})^{2n+\theta}}dw
\end{align*}
where $A>0$ and $(\cos\frac{w}{2})^{2n+\theta}$ is defined to be principal branch of power function $z^{2n+\theta}$. 
\end{thm}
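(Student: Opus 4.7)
The plan is to insert the Fourier--Gaussian representation of $e^{-\lambda_k t}$ into Tavar\'e's series, swap sum and integral, sum the resulting Taylor series in closed form, and finish by flipping the contour. Writing $\lambda_k=\tfrac{1}{2}(k+\tfrac{\theta-1}{2})^{2}-\tfrac{(1-\theta)^{2}}{8}$, the characteristic-function identity for a centered Gaussian together with Cauchy's theorem gives, for any $A>0$,
\begin{align*}
e^{-\lambda_{k}t}=e^{(1-\theta)^{2}t/8}\frac{1}{\sqrt{2\pi t}}\int_{-\infty+\ii A}^{\infty+\ii A}e^{-w^{2}/(2t)}e^{\ii(k+(\theta-1)/2)w}\,dw.
\end{align*}
On this contour $|e^{\ii kw}|=e^{-kA}$, whereas Tavar\'e's coefficients grow only polynomially in $k$, so Fubini justifies the interchange.

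For $n\ge 1$, set $j=k-n$ and $\beta=2n+\theta-1$; the factorization $(n+\theta)_{(n+j-1)}=(n+\theta)_{(n-1)}(\beta)_{(j)}$ together with $(n+\theta)_{(n-1)}\beta=(n+\theta)_{(n)}=n!\binom{2n-1+\theta}{n}$ turns the $k$-sum into
\begin{align*}
\binom{2n-1+\theta}{n}e^{\ii nw}\sum_{j\geq 0}(\beta+2j)\frac{(\beta)_{(j)}}{j!}(-e^{\ii w})^{j}.
\end{align*}
The inner series is evaluated via $(1-z)^{-\beta}=\sum_j (\beta)_{(j)}z^j/j!$ and its $z$-derivative as $\beta(1-e^{\ii w})(1+e^{\ii w})^{-(\beta+1)}$. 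Substituting $1+e^{\ii w}=2e^{\ii w/2}\cos(w/2)$ and $1-e^{\ii w}=-2\ii e^{\ii w/2}\sin(w/2)$ collapses the integrand to
\begin{align*}
-\binom{2n-1+\theta}{n}\frac{\ii}{2^{2n-1+\theta}}\,e^{\ii(1-\theta)w/2}\frac{\sin(w/2)}{(\cos(w/2))^{2n+\theta}},
\end{align*}
and multiplying by the remaining phase $e^{\ii(\theta-1)w/2}$ from the Gaussian representation cancels the exponential. One then has the claimed formula on the contour $\mathrm{Im}(w)=+A$ but with an overall minus sign. Since the resulting integrand is odd in $w$ (the Gaussian and $(\cos(w/2))^{2n+\theta}$ are even, $\sin(w/2)$ is odd), the substitution $w\mapsto -w$ gives $\int_{-\infty+\ii A}^{\infty+\ii A}=-\int_{-\infty-\ii A}^{\infty-\ii A}$, absorbing the sign and producing the stated contour.

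The case $n=0$ is handled the same way starting from $d_0^\theta(t)=1-\sum_{k\ge 1}\cdots$: the analogous algebra yields the closed form $1-(1-e^{\ii w})(1+e^{\ii w})^{-\theta}$; the leading $1$ in this expression integrates (against the Gaussian) to exactly $1$ via $\int e^{-w^2/(2t)}e^{\ii(\theta-1)w/2}dw=\sqrt{2\pi t}\,e^{-(1-\theta)^2 t/8}$, cancelling the leading $1$ in Tavar\'e's formula, and the remaining piece matches the theorem (noting $\binom{\theta-1}{0}=1$) after the same trigonometric simplification and contour flip. The principal technical difficulty is choosing a branch of $(\cos(w/2))^{2n+\theta}$ for non-integer $\theta$ under which the integrand is analytic on both horizontal contours and genuinely odd under $w\mapsto -w$; this is arranged by analytic continuation from a neighborhood of $w=0$ (where $\cos(w/2)>0$), and the bound $|\cos(w/2)|\ge\sinh(A/2)>0$ on $\mathrm{Im}(w)=\pm A$ secures absolute convergence.
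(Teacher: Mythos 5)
Your proof is correct, but it reaches the formula by a genuinely different and more direct route than the paper. The paper also starts from the Gaussian/Fourier representation of $e^{-\lambda_k t}$, but it then converts the combinatorial factors $\frac{(n+\theta)_{(k)}}{k!}$ and $\binom{k}{n}$ into Cauchy contour integrals over two auxiliary circles $C_1,C_2$, interchanges the sum with a \emph{triple} integral, sums the resulting geometric series (whose ratio is $-\xi_1\xi_2/(e^{\ii\xi_3\sqrt t}(\xi_1-1))$, controlled by a careful choice of radii), and then collapses back to a single integral through two residue computations in $\xi_1$ and $\xi_2$ — in particular it must argue that the residue $R_1$ contributes nothing. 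You bypass all of that by reindexing $k=n+j$, factoring $(n+\theta)_{(n+j-1)}=(n+\theta)_{(n-1)}(\beta)_{(j)}$ with $\beta=2n+\theta-1$, and recognizing $\sum_j(\beta+2j)\frac{(\beta)_{(j)}}{j!}z^j=\beta(1+z)(1-z)^{-\beta-1}$ at $z=-e^{\ii w}$ via the negative binomial series and its derivative; the trigonometric collapse $1\pm e^{\ii w}=2e^{\ii w/2}\{\cos,\,-\ii\sin\}(w/2)$ and the prefactor identity $(n+\theta)_{(n-1)}\beta=n!\binom{2n-1+\theta}{n}$ then give the theorem in two lines, with the oddness of the integrand converting your contour $\mathrm{Im}\,w=+A$ (where $|e^{\ii kw}|=e^{-kA}$ makes Fubini immediate) into the paper's $\mathrm{Im}\,w=-A$ and absorbing the sign. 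Your treatment of $n=0$ (the constant term integrating to $1$ and cancelling against Tavar\'e's leading $1$) is also sound. What your approach buys is the elimination of the auxiliary contours and residue calculus; what it requires instead is the closed-form summation, which you justify adequately. The one point to be careful about — the branch of $(\cos(w/2))^{2n+\theta}$ for non-integer $\theta$, since the pointwise principal branch is discontinuous where $\cos(w/2)$ crosses the negative real axis on the horizontal contour, whereas the function the series actually produces is the principal branch of $(1+e^{\ii w})^{-(2n+\theta)}$, which is analytic there — is flagged honestly in your last paragraph and is treated no more carefully in the paper itself, so I do not count it as a gap.
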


\begin{proof}
First, by simple algebraic calculation, we have  
\begin{align*}
&\frac{2k-1+\theta}{k!}(-1)^{k-n}\binom{k}{n}(n+\theta)_{(k-1)}=(-1)^{k-n}\left[\frac{(n+\theta)_{(k)}}{k!}\binom{k}{n}+\frac{(n+\theta)_{(k-1)}}{(k-1)!}\binom{k-1}{n}\right]\\
\end{align*}
and also 
$$
\lambda_{k}=\frac{1}{2}(k^2+(\theta-1)k)=\frac{1}{2}\left[k+\frac{\theta-1}{2}\right]^2-\frac{(\theta-1)^2}{8}.
$$
By Fourier transform, we have 
\begin{align*}
e^{-\lambda_{k}t}=\exp\left\{\frac{(\theta-1)^2t}{8}\right\}\frac{1}{\sqrt{2\pi}}\int_{-\infty-\ii a}^{\infty-\ii a}
\exp\left\{-\frac{\xi^2}{2}-\ii\xi(\frac{\theta-1}{2})\sqrt{t}\right\}e^{-\ii\xi\sqrt{t} k}d\xi
\end{align*}
where $a>\frac{\log\frac{3}{2}}{\sqrt{t}}$. Moreover, due to Cauchy integral formula, one has
\begin{align*}
&\frac{(n+\theta)_{(k)}}{k!}=\frac{1}{2\pi \ii}\int_{C_{1}}\frac{\xi_{1}^{n+\theta+k-1}}{(\xi_{1}-1)^{k+1}}d\xi_{1}\\
&\binom{k}{n}=\frac{1}{2\pi \ii}\int_{C_{2}}\frac{\xi_{2}^{k}}{(\xi_{2}-1)^{n+1}}d\xi_{2}
\end{align*}
where $C_{1}$ is a small circle centered at $1$ with radius $r>\frac{3}{2e^{\sqrt{t}a}-3}$ and $C_{2}$ is a circle with center $1$ and radius $\frac{1}{2}$. These contours are chosen to guarantee the uniform convergence of upcoming series. 

Replacing combinatorial coefficients and $e^{-\lambda_{k}t}$ by their integral forms in $d_{n}^{\theta}(t)$, one can have 
\begin{align*}
d_{n}^{\theta}(t)
=&\sum_{k=n}^{\infty}(-1)^{k-n}\frac{1}{(2\pi \ii)^2}\Big[\int_{C_{1}}\int_{C_{2}}\frac{\xi_{1}^{n+\theta+k-1}}{(\xi_{1}-1)^{k+1}}d\xi_{1} \frac{\xi_{2}^{k}}{(\xi_{2}-1)^{n+1}}d\xi_{2}+\\
&\int_{C_{1}}\int_{C_{2}}\frac{\xi_{1}^{n+\theta+k-2}}{(\xi_{1}-1)^{k}}d\xi_{1}\frac{\xi_{2}^{k-1}}{(\xi_{2}-1)^{n+1}}d\xi_{2}\Big]\exp\Big\{\frac{(\theta-1)^2t}{8}\Big\}\\
&\frac{1}{\sqrt{2\pi}}\int_{-\infty-\ii a}^{\infty-\ii a }
\exp\Big\{-\frac{\xi_{3}^2}{2}-\ii\xi_{3}(\frac{\theta-1}{2})\sqrt{t}\Big\}e^{-\ii\xi_{3}\sqrt{t} k}d\xi_{3}\\
=&\frac{1}{\sqrt{2\pi}(2\pi \ii)^2}\exp\Big\{\frac{(\theta-1)^2t}{8}\Big\}\sum_{k=n}^{\infty}\int_{C_{1}}\int_{C_{2}}\int_{-\infty-\ii a}^{\infty-\ii a}\\
&\exp\Big\{-\frac{\xi_{3}^2}{2}-\ii\xi_{3}(\frac{2n+\theta-1}{2})\sqrt{t}\Big\}\Big[-\frac{\xi_{1}\xi_{2}}{e^{\ii\xi_{3}\sqrt{t}}(\xi_{1}-1)}\Big]^{k-n}\\
&\frac{\xi_{1}^{2n-2+\theta}\xi_{2}^{n-1}(\xi_{1}\xi_{2}+\xi_{1}-1)}{(\xi_{1}-1)^{n+1}(\xi_{2}-1)^{n+1}}d\xi_{1}d\xi_{2}
d\xi_{3}
\end{align*}
The specifically chosen contours guarantee that 
$$
\left|\frac{\xi_{1}\xi_{2}}{e^{\ii\xi_{3}\sqrt{t}}(\xi_{1}-1)}\right|<1.
$$
Thus, it is safe to switch summation and integration. After summing up the geometric series, one has 
\begin{align*}
d_{n}^{\theta}(t)
=&\frac{1}{\sqrt{2\pi}(2\pi \ii)^2}\exp\Big\{\frac{(\theta-1)^2t}{8}\Big\}\int_{C_{1}}\int_{C_{2}}\int_{-\infty-\ii a}^{\infty-\ii a}\\
&\exp\Big\{-\frac{\xi_{3}^2}{2}-\ii\xi_{3}(\frac{2n+\theta-3}{2})\sqrt{t}\Big\}\frac{(\xi_{1}\xi_{2}+\xi_{1}-1)}{\xi_{1}\xi_{2}+e^{\ii\xi_{3}\sqrt{t}}(\xi_{1}-1)}\\
&\frac{\xi_{1}^{2n-2+\theta}\xi_{2}^{n-1}}{(\xi_{1}-1)^{n}(\xi_{2}-1)^{n+1}}d\xi_{1}d\xi_{2}
d\xi_{3}\\
=&\frac{1}{\sqrt{2\pi}(2\pi \ii)^2}\exp\Big\{\frac{(\theta-1)^2t}{8}\Big\}\int_{-\infty-\ii a}^{\infty-\ii a}\exp\Big\{-\frac{\xi_{3}^2}{2}-\ii\xi_{3}(\frac{2n+\theta-3}{2})\sqrt{t}\Big\}d\xi_{3}\\
&\int_{C_{2}}\frac{\xi_{2}^{n-1}}{(\xi_{2}+e^{\ii\sqrt{t}\xi_{3}})(\xi_{2}-1)^{n+1}}d\xi_{2}\int_{C_{1}}\frac{(\xi_{1}\xi_{2}+\xi_{1}-1)\xi_{1}^{2n-2+\theta}}{(\xi_{1}-1)^{n}(\xi_{1}-\frac{e^{\ii\sqrt{t}\xi_{3}}}{\xi_{2}+e^{\ii\sqrt{t}\xi_{3}}})}d\xi_{1}
\end{align*}
Now we first integrate out $\xi_{1}$, then 
$$
\frac{1}{2\pi\ii}\int_{C_{1}}\frac{(\xi_{1}\xi_{2}+\xi_{1}-1)\xi_{1}^{2n-2+\theta}}{(\xi_{1}-1)^{n}(\xi_{1}-\frac{e^{\ii\sqrt{t}\xi_{3}}}{\xi_{2}+e^{\ii\sqrt{t}\xi_{3}}})}d\xi_{1}=R_{1}(\xi_{2},\xi_{3})+R_{2}(\xi_{2},\xi_{3})
$$
where $R_{1}(\xi_{2},\xi_{3}), R_{2}(\xi_{2},\xi_{3})$ are residues at $1$ and $\frac{e^{\ii\sqrt{t}\xi_{3}}}{\xi_{2}+e^{\ii\sqrt{t}\xi_{3}}}$ and 
\begin{align*}
R_{1}(\xi_{2},\xi_{3})=&\sum_{l=0}^{n-1}(-1)^{n-l-1}(n-l-1)!\binom{n-1}{l}\left[\frac{(2n-1+\theta)!}{(2n-l-1+\theta)!}(\xi_{2}+1)
-\frac{(2n-2+\theta)!}{(2n-l-2+\theta)!}\right]\\
&(\frac{\xi_{2}+e^{\ii\xi_{3}\sqrt{t}}}{\xi_{2}})^{n-l}\\
R_{2}(\xi_{2},\xi_{3})=&(-1)^n\frac{(e^{\ii\xi_{3}\sqrt{t}}-1)e^{\ii\xi_{3}\sqrt{t}(2n-2+\theta)}}{\xi_{2}^{n-1}(\xi_{2}+e^{\ii\xi_{3}\sqrt{t}})^{n-1+\theta}}.
\end{align*}
Then 
\begin{align*}
d_{n}^{\theta}(t)
=&\frac{1}{\sqrt{2\pi}(2\pi \ii)^2}\exp\Big\{\frac{(\theta-1)^2t}{8}\Big\}\int_{-\infty-\ii a}^{\infty-\ii a}\exp\Big\{-\frac{\xi_{3}^2}{2}-\ii\xi_{3}(\frac{2n+\theta-3}{2})\sqrt{t}\Big\}d\xi_{3}\\
&\int_{C_{2}}\frac{\xi_{2}^{n-1}}{(\xi_{2}+e^{\ii\sqrt{t}\xi_{3}})(\xi_{2}-1)^{n+1}}[R_{1}(\xi_{2},\xi_{3})+R_{2}(\xi_{2},\xi_{3})]d\xi_{2}
\end{align*}

One can show that 
$$
\frac{1}{2\pi\ii}\int_{C_{2}}\frac{\xi_{2}^{n-1}}{(\xi_{2}+e^{\ii\sqrt{t}\xi_{3}})(\xi_{2}-1)^{n+1}}R_{1}(\xi_{2},\xi_{3})d\xi_{2}=0
$$
because the maximum degree of numerator is $n-1$ and the degree of denominator is $n+1$. Moreover, 
\begin{align*}
&\frac{1}{2\pi \ii}\int_{C_{2}}\frac{\xi_{2}^{n-1}}{(\xi_{2}+e^{\ii\sqrt{t}\xi_{3}})(\xi_{2}-1)^{n+1}}R_{2}(\xi_{2},\xi_{3})d\xi_{2}\\=&(-1)^n(e^{\ii\xi_{3}\sqrt{t}}-1)e^{i\xi_{3}\sqrt{t}(2n-2+\theta)}\frac{1}{2\pi\ii}\int_{C_{2}}\frac{1}{(\xi_{2}-1)^{n+1}(\xi_{2}+e^{\ii\xi_{3}\sqrt{t}})^{n+\theta}}d\xi_{2}\\
=&(e^{\ii\xi_{3}\sqrt{t}}-1)e^{i\xi_{3}\sqrt{t}(2n-2+\theta)}\binom{2n-1+\theta}{n}(1+e^{\ii\xi_{3}\sqrt{t}})^{-(2n+\theta)},
\end{align*}
where the contour integral over $C_{2}$ only equals to the residue at $1$ for $\xi_{2}=-e^{\ii\xi_{3}\sqrt{t}}$ is outside of $C_{2}$.
Thus,
\begin{align*}
d_{n}^{\theta}(t)=&\frac{1}{\sqrt{2\pi}}\exp\Big\{\frac{(\theta-1)^2t}{8}\Big\}\int_{-\infty-\ii a}^{\infty-\ii a}
\exp\Big\{-\frac{\xi_{3}^2}{2}-\ii\xi_{3}(\frac{2n+\theta-3}{2})\sqrt{t}\Big\}\\
&e^{\ii\xi_{3}\sqrt{t}(2n-2+\theta)}(e^{\ii\xi_{3}\sqrt{t}}-1)\binom{2n-1+\theta}{n}(1+e^{\ii\xi_{3}\sqrt{t}})^{-(2n+\theta)}
d\xi_{3}\\
=&\binom{2n-1+\theta}{n}\exp\Big\{\frac{(\theta-1)^2t}{8}\Big\}\frac{1}{\sqrt{2\pi}}\int_{-\infty-\ii a}^{\infty-\ii a}\\
&\exp\Big\{-\frac{\xi_{3}^2}{2}+\ii\xi_{3}(\frac{2n+\theta-1}{2})\sqrt{t}\Big\}\frac{(e^{\ii\xi_{3}\sqrt{t}}-1)}{(1+e^{\ii\xi_{3}\sqrt{t}})^{(2n+\theta)}}d\xi_{3}
\end{align*}
Consider substitution $w=\xi_{3}\sqrt{t}$, then one will have
\begin{align*}
d_{n}^{\theta}(t)=&\binom{2n-1+\theta}{n}\frac{1}{2^{2n-1+\theta}}e^{\frac{(1-\theta)^2t}{8}}
\frac{1}{\sqrt{2\pi t}}\int_{-\infty-\ii A}^{\infty-\ii A}
e^{-\frac{w^2}{2t}}\frac{\ii\sin\frac{w}{2}}{(\cos\frac{w}{2})^{2n+\theta}}dw
\end{align*}
where $A=a\sqrt{t}>\log(3/2)$. But one can show that $A$ can be any positive number because for $w=R+\ii y,~y\in[A^{'},A]$
$$
\left|e^{-\frac{w^2}{2t}}\frac{\ii\sin\frac{w}{2}}{(\cos\frac{w}{2})^{2n+\theta}}\right|\leq M e^{-\frac{R^2}{2t}}.
$$
The usual argument for Fourier transform shows that one can remove the restriction for $A$.
\end{proof}

\section{Local Central Limit Theorem}

As we know that $\lim_{t\to0}tD_{t}=2$~(refer to \cite{MR2574323}). One needs to consider asymptotic behavior of $\p(D_{t}=\left[\frac{2+\sqrt{t}v}{t}\right])$ to establish central limit theorem.  However, the integral representation in Theorem \ref{asm} is not suitable for asymptotic analysis, so we derive an alternative integral representation through contour deformation.

\begin{thm}\label{arep}
For $n\geq0$, $d_{n}^{\theta}(t)$ has the following integral representation
\begin{align*}
d_{n}^{\theta}(t)=&\p(D_{t}=n)=\binom{2n-1+\theta}{n}\frac{c_{n}^{\theta}(t)}{2^{2n-1+\theta}} 
\end{align*}
where $C$ is a unit circle,
$$
c_{n}^{\theta}(t)=\prod_{k=1}^{\infty}[1-e^{\frac{-4k\pi^2}{t}}]e^{\frac{(1-\theta)^2t}{8}}
\frac{\ii}{2\sqrt{2\pi t}}\int_{C}(K_{t}(z)-K_{t}(-z))\phi_{t}(z)dz,
$$ 
$$
K_{t}(z)=e^{-\frac{(\pi+z)^2}{2t}}\frac{\sin(\frac{z+\pi}{2})}{\cos^{2n+\theta}(\frac{z+\pi}{2})}
$$
and
$$
\phi_{t}(z)=\prod_{k=1}^{\infty}\left[1+(-1)^{\theta-1}2\cosh\left(\frac{2\pi z}{t}\right)e^{-\frac{4\pi^2 k}{t}}+e^{-\frac{8\pi^2 k}{t}}\right].
$$
\end{thm}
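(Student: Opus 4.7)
The plan is to reshape the horizontal-line integral of Theorem~\ref{asm} into the stated contour integral on $C=\{|z|=1\}$ in two moves: a Jacobi-triple-product summation over the singularities of the integrand, followed by a residue/symmetry argument converting a segment integral into a loop integral. The integrand $F(w)=e^{-w^{2}/(2t)}\,\ii\sin(w/2)/\cos^{2n+\theta}(w/2)$ has singularities at $w=(2k+1)\pi$, $k\in\mathbb{Z}$, which partition the contour naturally.

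\noindent\textbf{Step 1 (strip decomposition and theta summation).} First I would split $\int_{-\infty-\ii A}^{\infty-\ii A}F(w)\,dw=\sum_{k\in\mathbb{Z}}\int_{2k\pi-\ii A}^{2(k+1)\pi-\ii A}F(w)\,dw$ and substitute $w=(2k+1)\pi+z$ in the $k$-th piece, so that $z$ ranges over the fixed segment $(-\pi-\ii A,\pi-\ii A)$ for every~$k$. Using $\sin((z+(2k+1)\pi)/2)=(-1)^{k}\cos(z/2)$ and $\cos((z+(2k+1)\pi)/2)=(-1)^{k+1}\sin(z/2)$ under a consistent principal branch, each transformed integrand becomes a $k$-dependent phase times $\cos(z/2)/\sin^{2n+\theta}(z/2)\cdot e^{-(z+(2k+1)\pi)^{2}/(2t)}$. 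Expanding $(z+(2k+1)\pi)^{2}=(z+\pi)^{2}+4\pi kz+4\pi^{2}k(k+1)$, interchanging sum and integral (justified by the Gaussian decay in $k$), and applying the Jacobi triple product $\sum_{k}v^{k}q^{k^{2}}=\prod_{j\geq 1}(1-q^{2j})(1+vq^{2j-1})(1+v^{-1}q^{2j-1})$ with $q=e^{-2\pi^{2}/t}$ and $v=\epsilon e^{-2\pi z/t}q$, where $\epsilon=(-1)^{\theta-1}$, one obtains
$$\sum_{k\in\mathbb{Z}}\epsilon^{k}e^{-(z+(2k+1)\pi)^{2}/(2t)}=e^{-(z+\pi)^{2}/(2t)}\prod_{j\geq 1}(1-e^{-4\pi^{2}j/t})\,(1+\epsilon e^{2\pi z/t})\,\phi_{t}(z).$$
A short check using the algebraic identity $e^{-(z+\pi)^{2}/(2t)+2\pi z/t}=e^{-(z-\pi)^{2}/(2t)}$ together with $(-1)^{-\theta}\epsilon=-1$ rewrites $(-1)^{-\theta}e^{-(z+\pi)^{2}/(2t)}(1+\epsilon e^{2\pi z/t})=(-1)^{-\theta}e^{-(z+\pi)^{2}/(2t)}-e^{-(z-\pi)^{2}/(2t)}$, producing exactly the combination $K_{t}(z)-K_{t}(-z)$. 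Writing $G(z)=(K_{t}(z)-K_{t}(-z))\phi_{t}(z)$, this gives
$$\int_{-\infty-\ii A}^{\infty-\ii A}F(w)\,dw=\ii\prod_{j\geq 1}(1-e^{-4\pi^{2}j/t})\int_{-\pi-\ii A}^{\pi-\ii A}G(z)\,dz.$$

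\noindent\textbf{Step 2 (segment to unit circle).} Next I would verify two structural properties of $G$: oddness in $z$ (immediate from the definition and the evenness of $\phi_{t}$), and $2\pi$-periodicity. The latter is the key identity: from $K_{t}(z+2\pi)=-(-1)^{-\theta}e^{-2\pi z/t-4\pi^{2}/t}K_{t}(z)$, $K_{t}(-(z+2\pi))=-K_{t}(z)$, and the telescoping shift $\phi_{t}(z+2\pi)=(1+\epsilon e^{2\pi z/t})\phi_{t}(z)/(1+\epsilon e^{-2\pi z/t-4\pi^{2}/t})$, all the shift factors cancel exactly and one recovers $G(z+2\pi)=G(z)$. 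The only singularity of $G$ inside the rectangle with corners $\pm\pi\pm \ii A$ is the pole at $z=0$, and this is also the only singularity enclosed by $C$, so by the residue theorem $\oint_{\mathrm{rect}}G(z)\,dz=\oint_{C}G(z)\,dz$. Traversing the rectangle counterclockwise, the two vertical sides cancel by the $2\pi$-periodicity just established, and oddness of $G$ forces the top and bottom integrals to coincide; hence $\int_{-\pi-\ii A}^{\pi-\ii A}G(z)\,dz=\tfrac{1}{2}\oint_{C}G(z)\,dz$. Substituting back into the expression for $d_{n}^{\theta}(t)$ from Theorem~\ref{asm} produces exactly the claimed formula for $c_{n}^{\theta}(t)$.

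\noindent\textbf{Main obstacle.} The crux is verifying $G(z+2\pi)=G(z)$: the modular shifts of $K_{t}$ and of $\phi_{t}$ have to cancel exactly, and this is a tight algebraic identity tying the Jacobi regrouping of Step~1 to the transformation rule of the product $\phi_{t}$. Two secondary technical points also deserve attention: interchanging series and integral in the Jacobi step (controlled by the doubly exponential factor $e^{-2\pi^{2}k^{2}/t}$), and maintaining a consistent choice of the principal branch of $\cos^{2n+\theta}$ across all strips, which is what lets the phases $\epsilon^{k}=(-1)^{k(\theta-1)}$ assemble into a clean theta variable; the non-integer $\theta$ case in particular requires care, since the factorisation of a typical factor of $\phi_{t}$ into two exponential pieces does not close up as neatly as for integer $\theta$ and must be tracked via the branch used in the definition of $(-1)^{\theta-1}$.
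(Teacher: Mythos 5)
Your proposal is correct, and its analytic core --- the Jacobi triple product applied to $\sum_{k}(-1)^{k(\theta-1)}e^{-((2k+1)\pi+z)^{2}/(2t)}$, producing the infinite product, $\phi_{t}$, and the combination $K_{t}(z)-K_{t}(-z)$ --- coincides with the paper's. Where you genuinely diverge is the contour bookkeeping, whose order of operations you reverse. The paper first doubles the horizontal line into a pair of lines using oddness of $f$, deforms that pair onto the disjoint union of unit circles centred at the odd multiples of $\pi$, and only then translates every circle to the origin and sums under the integral sign, so the closed contour $C$ is already in place before the theta summation. You keep the open line, chop it into period strips, sum first to obtain the single segment integral $\int_{-\pi-\ii A}^{\pi-\ii A}G(z)\,dz$, and close up afterwards --- for which you need the extra identity $G(z+2\pi)=G(z)$, which the paper never has to state. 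That identity does hold: with $\epsilon=(-1)^{\theta-1}$, the telescoping shift $(1+\epsilon e^{2\pi z/t})/(1+\epsilon e^{-2\pi z/t-4\pi^{2}/t})$ of $\phi_{t}$ is exactly undone by the quasi-periodicity factor of $K_{t}(z+2\pi)$ together with $K_{t}(-(z+2\pi))=-K_{t}(z)$, so your rectangle argument goes through and reproduces the factor $\tfrac{1}{2}\oint_{C}$ with the correct constant. What your route buys is that all contour deformation is performed for one explicitly odd, $2\pi$-periodic function rather than for infinitely many circles; what it costs is verifying that periodicity. One caveat applies to both proofs equally: for non-integer $\theta$ the point $z=0$ is a branch point rather than a pole, the real segments $[1,\pi]$ and $[-\pi,-1]$ lying between the rectangle and $C$ (and indeed the vertical edges $\mathrm{Re}(z)=\pm\pi$ themselves) are cut lines of the principal branch of $\cos^{2n+\theta}$, and $\epsilon^{-1}\neq\epsilon$ in the triple-product factorisation; hence the step $\oint_{\mathrm{rect}}G=\oint_{C}G$ requires the same oddness-based cancellation of the two cut contributions that the paper invokes when it asserts that the ``integrations on cut lines are perfectly cancelled.'' You flag these points explicitly, which is the right instinct; they are no worse in your argument than in the paper's own.
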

\begin{proof}
 Let 
$$
f(w)=e^{-\frac{w^2}{2t}}\frac{\ii\sin\frac{w}{2}}{(\cos\frac{w}{2})^{2n+\theta}}
$$
One can easily show that $f(-w)=-f(w)$. Therefore,
\begin{align*}
\int_{-\infty-\ii A}^{\infty-\ii A}f(w)dw=\int_{\infty+\ii A}^{-\infty+\ii A}-f(-w)dw=\int_{\infty+\ii A}^{-\infty+\ii A}f(w)dw\end{align*}
Then
$$
2\int_{-\infty-\ii A}^{\infty-\ii A}f(w)dw=\int_{-\infty-\ii A}^{\infty-\ii A}f(w)dw+\int_{\infty+\ii A}^{-\infty+\ii A}f(w)dw=\int_{\mathcal{L}}f(w)dw
$$
where contour $\mathcal{L}$ are two parallel horizontal lines. One can easily deform the contour $\mathcal{L}$ to contours in Figure \ref{drec}.

\begin{figure}
  \centering
  \includegraphics[width=3.5in,height=3in]{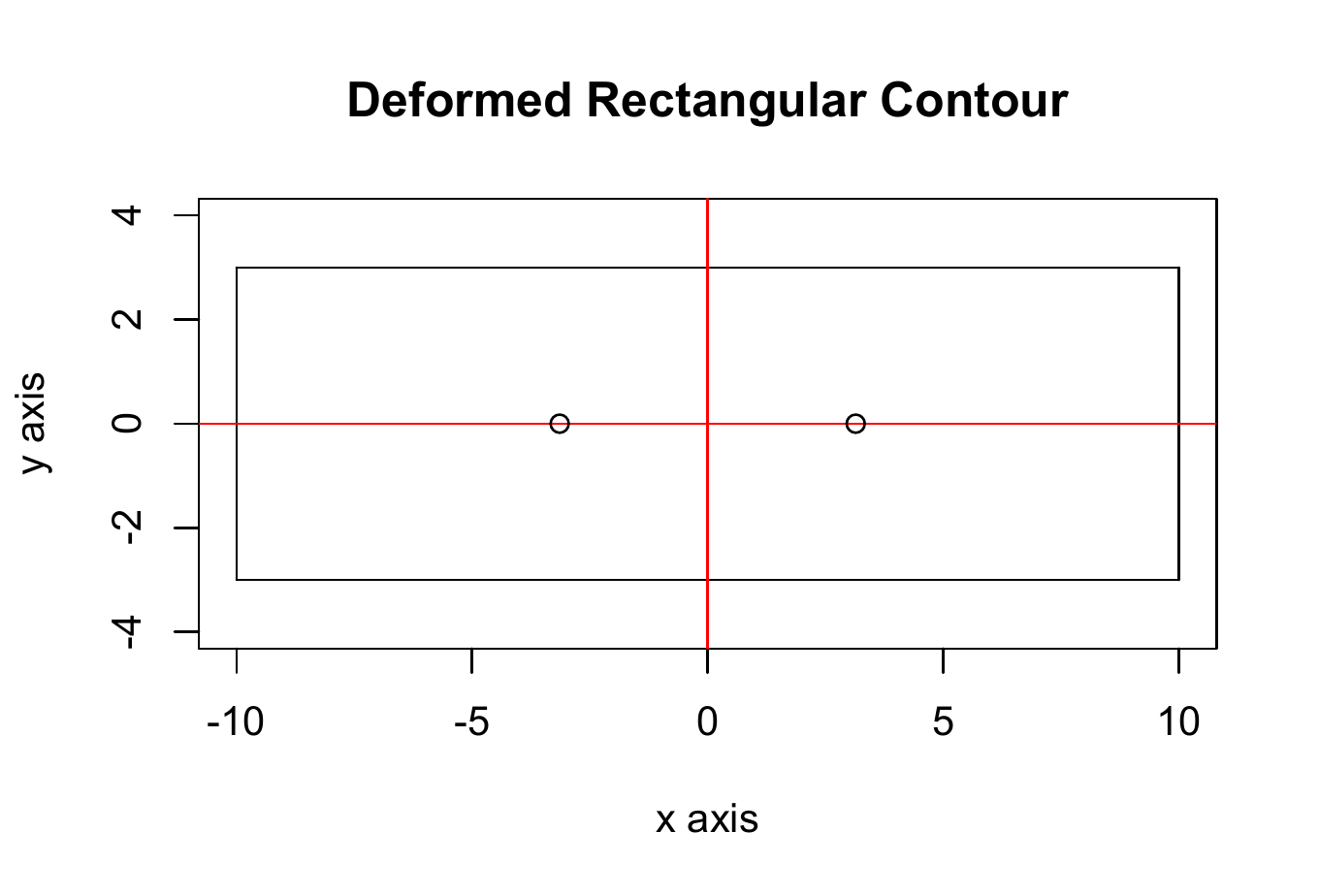}
  \caption{Deformed Rectangular Contour}\label{drec}
\end{figure}

 Even though $f(w)$ is not analytic on segments $[(4k+1)\pi,(4k+3)\pi],k\geq1$ for they are cut lines of $(\cos\frac{w}{2})^{2n+\theta}$. Due to symmetry of $f(w)$, their integrations on cut lines are perfectly cancelled. 
 
 Therefore, 
$$
\int_{-\infty-\ii A}^{\infty-\ii A}f(w)dw=\sum_{k\in\mathbb{Z}}\int_{C_{2k+1}}f(w)dw=\int_{C}\sum_{k\in\mathbb{Z}}f((2k+1)\pi+z)dz
$$
where $C_{2k+1}$ is chosen to be a circle centered at $(2k+1)\pi$ with radius $1$ and $C$ is a unit circle. 
Therefore,
\begin{align*}
&\sum_{k\in\mathbb{Z}}f((2k+1)\pi+z)=\sum_{k\in\mathbb{Z}}e^{-\frac{((2k+1)\pi+z)^2}{2t}}\frac{\ii\sin((2k+1)\pi/2+\frac{z}{2})}{\cos^{2n+\theta}((2k+1)\pi/2+\frac{z}{2})}\\
=&\sum_{k\in\mathbb{Z}}e^{-\frac{((2k+1)\pi+z)^2}{2t}}\frac{(-1)^k}{(-1)^{k\theta}}\frac{\ii\sin(\frac{z+\pi}{2})}{\cos^{2n+\theta}(\frac{z+\pi}{2})}\\
=&\left[\sum_{k\in\mathbb{Z}}(-1)^{k(\theta-1)}e^{-\frac{((2k+1)\pi+z)^2}{2t}}\right]\frac{\ii\sin(\frac{z+\pi}{2})}{\cos^{2n+\theta}(\frac{z+\pi}{2})}
\end{align*}
where $(-1)^{k(\theta-1)}$ is evaluated as principal branch. If we denote 
$$
h_{t}(z|\theta)=\sum_{k\in\mathbb{Z}}(-1)^{k(\theta-1)}e^{-\frac{[(2k+1)\pi+z]^2}{2t}},
$$
then
$$
\int_{-\infty-\ii A}^{\infty-\ii A}f(w)dw=\frac{\ii}{2}\int_{C}h_{t}(z|\theta)\frac{\sin(\frac{z+\pi}{2})}{\cos^{2n+\theta}(\frac{z+\pi}{2})}dz
$$
 The function $h_{t}(z|\theta)$ is indeed a double periodic function with period $4k\pi$ in $z$ and $2k$ in $\theta$.
It is actually the famous Jacobi theta function. In the following, we need the famous Jacobi triple product identity formula~(refer to \cite{abramowitz+stegun})
\begin{lemma}\label{tri}
For $|q|<1,x\neq 0$, we have
$$
\sum_{n\in\mathbb{Z}}q^{n^2}x^n=\prod_{n=1}^{\infty}(1+xq^{2n-1})(1+x^{-1}q^{2n-1})(1-q^{2n})
$$
\end{lemma}
Because one can rewrite $h_{t}(z|\theta)$ as
\begin{align*}
h_{t}(z|\theta)=&e^{-\frac{(\pi+z)^2}{2t}}\sum_{k\in\mathbb{Z}}[e^{-\frac{2\pi^2}{t}}]^{k^2}[(-1)^{\theta-1}e^{-\frac{2\pi(\pi+z)}{t}}]^k,
\end{align*}
where
$$
q=e^{-\frac{2\pi^2}{t}},\quad x=(-1)^{\theta-1}e^{-\frac{2\pi(\pi+z)}{t}}.
 $$
Then by Lemma \ref{tri}, we have
\begin{align*}
h_{t}(z|\theta)=&e^{-\frac{(\pi+z)^2}{2t}}\prod_{k=1}^{\infty}[1+(-1)^{\theta-1}e^{-\frac{2\pi(\pi+z)}{t}}e^{-\frac{2(2k-1)\pi^2}{t}}]\\
&[1+(-1)^{\theta-1}e^{\frac{2\pi(\pi+z)}{t}}e^{-\frac{2(2k-1)\pi^2}{t}}]\\
&[1-e^{-\frac{4k\pi^2}{t}}]\\
=&[e^{-\frac{(\pi+z)^2}{2t}}+(-1)^{\theta-1}e^{-\frac{(\pi-z)^2}{2t}}] \prod_{k=1}^{\infty}[1-e^{-\frac{4k\pi^2}{t}}] \\
&\prod_{k=1}^{\infty}[1+(-1)^{\theta-1}2\cosh(2\pi z/t)e^{-\frac{4k\pi^2}{t}}+e^{-\frac{8k\pi^2}{t}}]
\end{align*}
Note that 
$$
\sin\frac{\pi+z}{2}=\sin\frac{\pi-z}{2}, \quad -\cos\frac{\pi+z}{2}=\cos\frac{\pi-z}{2}
$$
then 
\begin{align*}
&[e^{-\frac{(\pi+z)^2}{2t}}+(-1)^{\theta-1}e^{-\frac{(\pi-z)^2}{2t}}]\frac{\sin(\frac{z+\pi}{2})}{\cos^{2n+\theta}(\frac{z+\pi}{2})}\\
=&e^{-\frac{(\pi+z)^2}{2t}}\frac{\sin(\frac{z+\pi}{2})}{\cos^{2n+\theta}(\frac{z+\pi}{2})}+(-1)^{\theta-1}e^{-\frac{(\pi-z)^2}{2t}}\frac{\sin(\frac{\pi-z}{2})}{(-1)^{2n+\theta}\cos^{2n+\theta}(\frac{\pi-z}{2})}\\
=&e^{-\frac{(\pi+z)^2}{2t}}\frac{\sin(\frac{z+\pi}{2})}{\cos^{2n+\theta}(\frac{z+\pi}{2})}-e^{-\frac{(\pi-z)^2}{2t}}\frac{\sin(\frac{\pi-z}{2})}{\cos^{2n+\theta}(\frac{\pi-z}{2})}
\end{align*}
If we denote 
$$
K_{t}(z)=e^{-\frac{(\pi+z)^2}{2t}}\frac{\sin(\frac{z+\pi}{2})}{\cos^{2n+\theta}(\frac{z+\pi}{2})},
$$
then our theorem is proved. 
\end{proof}

\subsection{Steep Descent Contour for Asymptotic Analysis} We are going to use steepest descent method to do asymptotic analysis, so we need to find out steep descent path for asymptotic analysis. 
If we denote
$$
g_{t}(z)=\left[e^{-\frac{(\pi+z)^2}{2t}}\frac{\sin(\frac{z+\pi}{2})}{\cos^{2n+\theta}(\frac{z+\pi}{2})}-e^{-\frac{(\pi-z)^2}{2t}}\frac{\sin(\frac{\pi-z}{2})}{\cos^{2n+\theta}(\frac{\pi-z}{2})}\right]\phi_{t}(z)
$$
one can easily show that $g_{t}(z)$ is an odd function, i.e. $g_{t}(-z)=-g_{t}(z)$. Note that $\phi_{t}(z)$ is an even function. Then
\begin{align*}
\int_{C}g_{t}(z)\phi_{t}(z)dz=&\int_{C_{+}}g_{t}(z)\phi_{t}(z)dz+\int_{C_{-}}g_{t}(z)\phi_{t}(z)dz\\
=&\int_{C_{+}}g_{t}(z)\phi_{t}(z)dz-\int_{C_{+}}g_{t}(-z)\phi_{t}(-z)dz\\
=&\int_{C_{+}}g_{t}(z)\phi_{t}(z)dz+\int_{C_{+}}g_{t}(z)\phi_{t}(z)dz\\
=&2\int_{C_{+}}g_{t}(z)\phi_{t}(z)dz
\end{align*}
where $C_{+}$ is the upper half circle and it has counter clockwise orientation. Hence
\begin{equation}
c_{n}^{\theta}(t)=\prod_{k=1}^{\infty}[1-e^{\frac{-4k\pi^2}{t}}]e^{\frac{(1-\theta)^2t}{8}}
\frac{\ii}{\sqrt{2\pi t}}\int_{C+}(K_{t}(z)-K_{t}(-z))\phi_{t}(z)dz. \label{eqq}
\end{equation}
Now we consider two paths~(Figure \ref{stt})
\begin{figure}
  \centering
  \includegraphics[width=3in,height=2.8in]{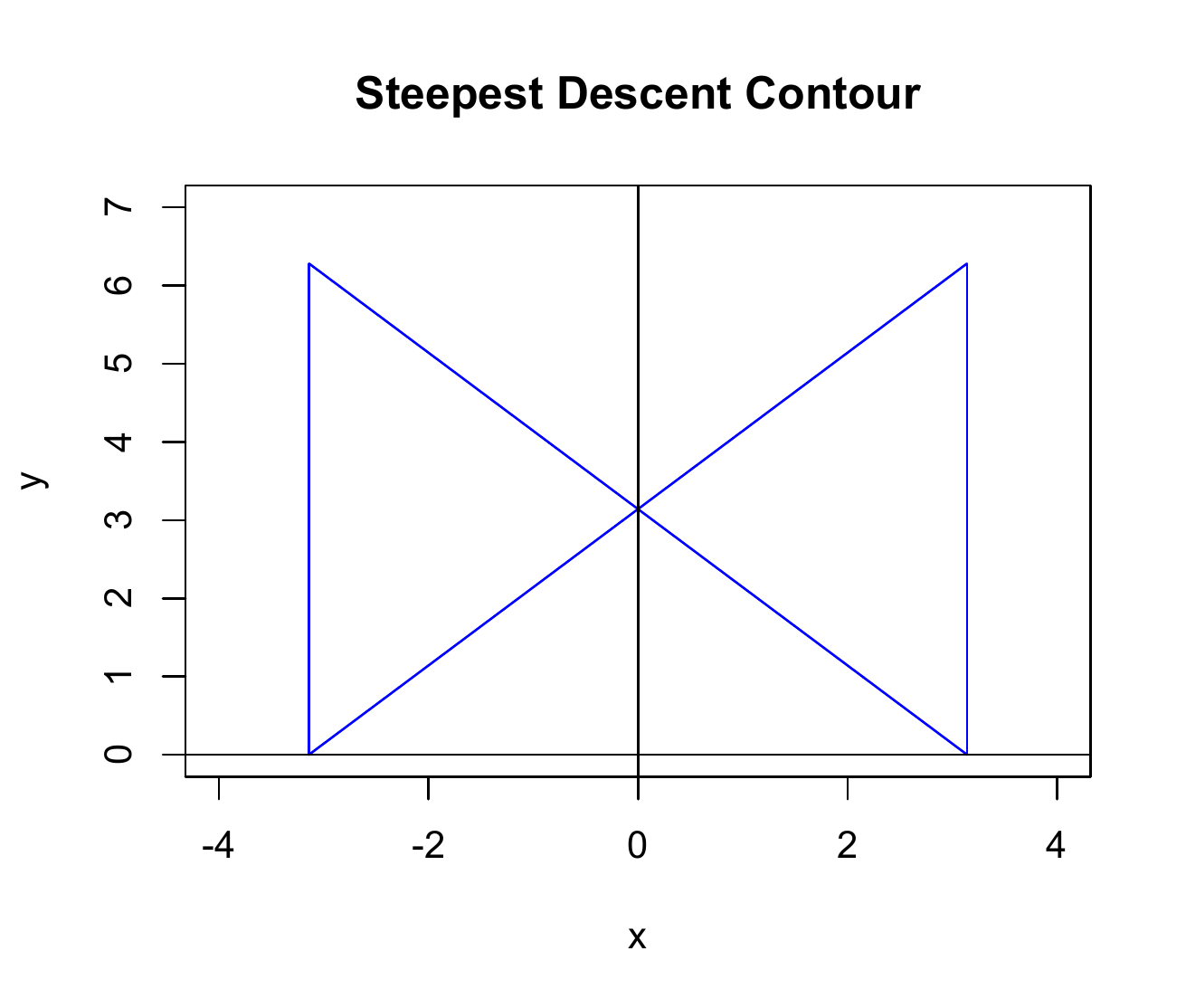}
  \caption{Steep Descent Contour}\label{stt}
\end{figure}
$$
L_{1}: z(y)=\begin{cases}
-\pi+y+\ii y, & 0\leq y\leq 2\pi\\
\pi+\ii y,& 2\pi\leq y\leq 0
\end{cases}
$$
$$
L_{2}: z(y)=\begin{cases}
\pi-y+\ii y, & 0\leq y\leq 2\pi\\
-\pi+\ii y,& 2\pi\leq y\leq 0
\end{cases}
$$
These two paths $L_{1}$ and $L_{2}$ serve as steep descent path for integral $\int_{C+}K_{t}(z)\phi_{t}(z)dz$ and $-\int_{C+}K_{t}(-z)\phi_{t}(z)dz$ respectively though neither of them are steepest descent contour. However, these two paths are good enough for our analysis. Indeed, we can rewrite $K_{t}(z)$ as 
$$
e^{-\frac{(z+\pi)^2}{2t}}\frac{\sin\frac{z+\pi}{2}}{\cos^{2n+\theta}(\frac{z+\pi}{2})}=\exp\left\{\frac{1}{t}\psi_{t}(\frac{z+\pi}{2})\right\}\sin\frac{z+\pi}{2}$$
where 
$$
\psi_{t}(z)=-2z^2-(2n+\theta)t\log\cos(z);
$$ 
similarly, 
$$
K_{t}(-z)=\exp\left\{\frac{1}{t}\psi_{t}(\frac{-z+\pi}{2})\right\}\sin\frac{-z+\pi}{2}.
$$
Since 
$
\psi_{t}^{'}(z)=0
$
has one solution $z=0$. Then $z=-\pi$ is a critical point of $\psi_{t}(\frac{z+\pi}{2})$, and $z=\pi$ is a critical point of $\psi_{t}(\frac{-z+\pi}{2})$. Moreover, 
because on $L_{1}$, 
$$
a(y)=\mathrm{Re}[\psi_{t}(\frac{z(y)+\pi}{2})]=-\frac{(2n+\theta)t}{2}\log\frac{\cosh y+\cos y}{2}
$$
one can easily check that 
$$
a^{'}(y)=-(2n+\theta)t\frac{\sinh y-\sin y}{\cosh y+\cos y}\leq0,\quad \forall 0\leq y\leq 2\pi.
$$
therefore $a(y)$ is decreasing as $y$ increases. Likewise, 
$$
b(y)=\mathrm{Re}[\psi_{t}(\frac{z(y)-\pi}{2})]=-\frac{(2n+\theta)t}{2}\log\frac{\cosh y+\cos y}{2}=a(y)
$$
and $b(y)$ is also decreasing as $y$ increases. So $\mathrm{Re}[\psi_{t}(\frac{z+\pi}{2})]$ and $\mathrm{Re}[\psi_{t}(\frac{-z+\pi}{2})]$ are both decreasing as $z$ is moving away from $-\pi$ along $L_{1}$ and from $\pi$ along $L_{2}$ respectively. Also one can eventually show that the integration over remaining vertical paths on $L_{1}$ and $L_{2}$ can be ignored. Thus, the major contribution is around $\pi$ and $-\pi$. So $L_{1}$ and $L_{2}$ serve as steep descent contour for $\int_{C+}K_{t}(z)\phi_{t}(z)dz$ and $-\int_{C+}K_{t}(-z)\phi_{t}(z)dz$ respectively. Also $L_{1}$ and $L_{2}$ has a one-to-one correspondence $w=-\overline{z}$

\subsection{Contour Deformation to Steep Descent Contour} To deform the upper half unit circle to steep descent contour, we can first deform the upper half unit circle to a big upper half circle with radius $\pi$. Though $K_{t}(z)$ and $K_{t}(-z)$ are not analytic on the $[-\pi,\pi]$, the deformation can still go through because the integrand is odd. Therefore,
\begin{align*}
&\int_{C+}(K_{t}(z)-K_{t}(-z))\phi_{t}(z)dz=-\int_{L_{1}}K_{t}(z)\phi_{t}(z)dz-\int_{L_{2}}K_{t}(-z)\phi_{t}(z)dz\\
=&-\int_{L_{1}}K_{t}(z)\phi_{t}(z)dz+\int_{L_{1}}K_{t}(\overline{z})\phi_{t}(\overline{z})d\overline{z}\\
=&-\int_{L_{1}}\exp\left\{\frac{1}{t}\psi_{t}(\frac{z+\pi}{2})\right\}\sin\frac{\pi+z}{2}\phi_{t}(z)dz+\int_{L_{1}}\exp\left\{\frac{1}{t}\psi_{t}(\frac{\overline{z}+\pi}{2})\right\}\sin\frac{\pi+\overline{z}}{2}\phi_{t}(\overline{z})d\overline{z}\end{align*}
where $L_{1}$ and $C_{+}$ has opposite orientation.
If we let $C_{t}(\pi)$ and $C_{t}(-\pi)$ be two discs with center $\pi$ and $-\pi$ and radius $t^{\frac{1}{4}}\log\frac{1}{t}$. If we denote $L_{1,t}$ be the segment within $C_{t}(-\pi)$ and $L_{1,t}^c$ be the segment outside $C_{t}(-\pi)$.
\begin{lemma}\label{es}
There exists $\delta>0$, such that $\forall 0<t<\delta$
$$
\left|\int_{L_{1,t}^c}K_{t}(z)\phi_{t}(z)dz\right|=\left|\int_{L_{1,t}^c}K_{t}(\overline{z})\phi_{t}(\overline{z})d\overline{z}\right|\leq M_{1} \exp\left\{-M_{2}\log^4\frac{1}{t}\right\}
$$
where $M_{1},M_{2}$ are two constants and independent of $t$. 
\end{lemma}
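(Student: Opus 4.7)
The plan is to use the explicit form of $|K_t(z)|$ on each piece of $L_1$ (derived from the computation of $\text{Re}[\psi_t(\cdot)]$ in the preceding subsection) together with a uniform bound on $|\phi_t(z)|$, and then split $L_{1,t}^c$ naturally. Parametrize the diagonal portion of $L_1$ as $z = -\pi + y(1+\ii)$, $y \in [0, 2\pi]$; since $|z-(-\pi)| = y\sqrt{2}$, the diagonal part of $L_{1,t}^c$ is $y \in [t^{1/4}\log(1/t)/\sqrt{2}, 2\pi]$, and the entire vertical segment $z = \pi + \ii y$, $y \in [0, 2\pi]$, lies in $L_{1,t}^c$ as soon as $t$ is small. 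One has
\begin{equation*}
|K_t(z)| = \Bigl(\tfrac{\cos y + \cosh y}{2}\Bigr)^{-(2n+\theta)/2} \sqrt{\tfrac{\cosh y - \cos y}{2}}
\end{equation*}
on the diagonal part and $|K_t(z)| = e^{-(4\pi^2 - y^2)/(2t)} \sinh(y/2)/\cosh^{2n+\theta}(y/2)$ on the vertical part. For $\phi_t$, the elementary inequality $|\cosh(a+\ii b)| \leq \cosh(a)$ combined with $|\text{Re}(2\pi z/t)| \leq 2\pi^2/t$ on $L_1$ bounds each factor of the infinite product by $1 + 2 e^{-2\pi^2 k/t} + e^{-8\pi^2 k/t}$, so $|\phi_t(z)| \leq M$ for some constant $M$ uniformly in $t$.

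On the diagonal sub-arc I would split into $y \in [t^{1/4}\log(1/t)/\sqrt{2}, \epsilon]$ and $y \in [\epsilon, 2\pi]$ for a small fixed $\epsilon > 0$. The Taylor expansion $(\cos y + \cosh y)/2 = 1 + y^4/24 + O(y^8)$ gives $\log((\cos y + \cosh y)/2) \geq y^4/25$ for $y \leq \epsilon$ once $\epsilon$ is small. In the CLT regime $n = \lfloor(2+\sqrt{t}v)/t\rfloor$ one has $(2n+\theta) \geq 3/t$ for small $t$, while $y^4 \geq t\log^4(1/t)/4$ on the sub-arc, so
\begin{equation*}
\Bigl(\tfrac{\cos y + \cosh y}{2}\Bigr)^{-(2n+\theta)/2} \leq \exp\Bigl\{-\tfrac{(2n+\theta) y^4}{50}\Bigr\} \leq \exp\Bigl\{-\tfrac{3}{200}\log^4\tfrac{1}{t}\Bigr\}.
\end{equation*}
On $[\epsilon, 2\pi]$, $(\cos y + \cosh y)/2$ is bounded below by a constant strictly greater than $1$, giving $e^{-c/t}$ decay, much stronger than required. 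For the vertical segment, when $y \in [0, \pi]$ the Gaussian factor satisfies $e^{-(4\pi^2 - y^2)/(2t)} \leq e^{-3\pi^2/(2t)}$, while for $y \in [\pi, 2\pi]$ the denominator $\cosh^{2n+\theta}(y/2) \geq \cosh^{2n+\theta}(\pi/2)$ produces $e^{-c/t}$ decay; together $|K_t(z)| = O(e^{-c/t})$ uniformly on the vertical part.

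Integrating these pointwise bounds against the finite arc-length of $L_{1,t}^c$ and absorbing $M$, the factor $\sqrt{(\cosh y - \cos y)/2}$, and other constants into $M_1$ yields the claimed estimate with any $M_2 < 3/200$. The main obstacle is to pin down the explicit thresholds (the size of $\epsilon$, the lower bound $(2n+\theta) \geq 3/t$, and the Taylor estimate $\log((\cos y + \cosh y)/2) \geq y^4/25$) uniformly in the CLT window of $n$ and for all $0 < t < \delta$; once these are fixed, the rest is a routine bookkeeping of constants, and the equality of the two integrals in the statement reduces to a change-of-variables identity on the contour $L_1$.
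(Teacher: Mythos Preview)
Your proof is correct and follows essentially the same decomposition as the paper: split $L_{1,t}^c$ into the diagonal tail and the vertical segment, bound $|\phi_t|$ uniformly via the product formula, and on the diagonal use the fourth-order expansion $\log\bigl((\cos y+\cosh y)/2\bigr)\sim y^4/24$ together with $(2n+\theta)\asymp 4/t$ to manufacture the $\exp\{-c\log^4(1/t)\}$ factor.

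The one genuine difference is on the vertical segment. The paper pairs the two integrals so that only $\mathrm{Im}[\phi_t(z(y))]$ survives and draws the decay from $|\mathrm{Im}[\phi_t]|\le M_1 e^{-2\pi^2/t}$; you instead bound $|K_t|$ directly by splitting at $y=\pi$, using the Gaussian prefactor $e^{-(4\pi^2-y^2)/(2t)}\le e^{-3\pi^2/(2t)}$ on $[0,\pi]$ and the denominator $\cosh^{2n+\theta}(y/2)\ge\cosh^{3/t}(\pi/2)$ on $[\pi,2\pi]$. Your route is more elementary and actually bounds each of the two integrals separately, which is what the lemma literally asserts (the paper's cancellation trick only controls their difference). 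One small slip: in your $\phi_t$ estimate the exponent should read $-2\pi^2(2k-1)/t$ rather than $-2\pi^2 k/t$, since $2\cosh(2\pi^2/t)e^{-4\pi^2 k/t}\le 2e^{-2\pi^2(2k-1)/t}$; this does not affect the uniform boundedness of the product.
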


 One will see that the major contribution is from the integration over $L_{1,t}$. 
To pinpoint the above limits, we need to expand $\psi_{t}(\frac{z+\pi}{2})$ on $L_{1,t}$. Because 
\begin{align}
\psi_{t}(z)=&-2z^2-(2n+\theta)t\log\cos(z)
=\left[-2+\frac{(2n+\theta)t}{2}\right]z^2+(2n+\theta)t\sum_{n=2}^{\infty}(-1)^n\frac{2^{2n}(2^{2n}-1)B_{2n}}{2n(2n)!}z^{2n}\label{ee}
\end{align}
where $B_{2n}$ is Bernoulli number~(refer to \cite{abramowitz+stegun} for above expansion). We consider the following parametrizations:
$$
L_{1,t}:  z(y)=-\pi+t^{\frac{1}{4}}e^{\ii\frac{\pi}{4}} \sqrt{y},\quad 0\leq y\leq (\log\frac{1}{t})^2.
$$ 
Then one can show the following lemma. 
\begin{lemma}\label{con}
As $t\to0$, we have
\begin{align*}
-\int_{L_{1,t}}K_{t}(z)\phi_{t}(z)dz+\int_{L_{1,t}}K_{t}(\overline{z})\phi_{t}(\overline{z})d\overline{z}
\sim -\ii\frac{2\sqrt{6}\sqrt{2\pi}}{4}e^{-\frac{3v^3}{4}}\sqrt{t}. 
\end{align*}
\end{lemma}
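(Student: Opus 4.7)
The plan is to apply a Laplace-type expansion to both integrals around the critical point $z=-\pi$ using the prescribed parametrization of $L_{1,t}$. Since $v$ enters through $n=\lfloor(2+\sqrt t\,v)/t\rfloor$, one obtains the identity
\begin{align*}
(2n+\theta)t=4+2\sqrt t\,v+O(t),
\end{align*}
so that the quadratic coefficient in (\ref{ee}) satisfies $-2+(2n+\theta)t/2=\sqrt t\,v+O(t)$ and the quartic coefficient $(2n+\theta)t/12$ tends to $1/3$. With $u=\tfrac{z+\pi}{2}=\tfrac{t^{1/4}}{2}e^{\ii\pi/4}\sqrt y$ one has $u^2=\ii t^{1/2}y/4$ and $u^4=-ty^2/16$, and the tail of (\ref{ee}) is $O\bigl((2n+\theta)t\cdot u^6/t\bigr)=O\bigl(t^{1/2}(\log 1/t)^6\bigr)$. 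Putting these together yields
\begin{align*}
\frac{1}{t}\psi_t(u)=\frac{\ii v y}{4}-\frac{y^2}{48}+o(1),\qquad 0\le y\le(\log 1/t)^2.
\end{align*}

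I would then approximate the remaining factors. Using $\sin u=u+O(u^3)$ together with $dz=\tfrac{t^{1/4}}{2\sqrt y}e^{\ii\pi/4}\,dy$ gives $\sin\bigl(\tfrac{z+\pi}{2}\bigr)\,dz\sim \tfrac{\ii\sqrt t}{4}\,dy$, and the analogous computation on the conjugate path produces $-\tfrac{\ii\sqrt t}{4}\,dy$. For the theta-type product $\phi_t(z)$, on $L_{1,t}$ one has $\operatorname{Re}(2\pi z/t)=-2\pi^2/t+O(t^{-3/4})$, so $2\cosh(2\pi z/t)e^{-4\pi^2 k/t}=O\bigl(e^{-2\pi^2(2k-1)/t}\bigr)$ uniformly in $k\ge1$, and every factor of the infinite product equals $1+o(1)$; hence $\phi_t(z)\to 1$ uniformly on $L_{1,t}$.

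Combining the two integrals, the second one traces $\overline{L_{1,t}}$ and so carries the complex conjugate exponent; summing turns the pair into a real cosine integral,
\begin{align*}
&-\int_{L_{1,t}}K_t(z)\phi_t(z)\,dz+\int_{L_{1,t}}K_t(\overline z)\phi_t(\overline z)\,d\overline z\\
&\qquad\sim -\frac{\ii\sqrt t}{4}\int_{-\infty}^{\infty}e^{\ii vy/4-y^2/48}\,dy,
\end{align*}
where the tail $y>(\log 1/t)^2$ is absorbed by the Gaussian factor $e^{-y^2/48}$. Evaluating this Gaussian integral as $\sqrt{48\pi}\,e^{-3v^2/4}=2\sqrt 6\sqrt{2\pi}\,e^{-3v^2/4}$ produces the stated constant.

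The main obstacle is the uniform control of the $o(1)$ remainders on the full parameter range $y\in[0,(\log 1/t)^2]$. The three approximations --- truncation of $\psi_t$ at the quartic term, linearization of $\sin u$, and replacement of $\phi_t$ by $1$ --- must each contribute an error that, multiplied by $e^{-y^2/48}$ and integrated, still vanishes as $t\to 0$. The power estimate $u^{2k}/t=O\bigl(t^{(k-1)/2}(\log 1/t)^k\bigr)$ and the exponential smallness $\phi_t(z)-1=O(e^{-2\pi^2/t})$ are sufficient, but they need to be written out carefully so that the dominated convergence argument which extends the integration range to $[0,\infty)$ can be justified.
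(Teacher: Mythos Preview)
Your proposal is correct and follows essentially the same route as the paper's proof: the same parametrization of $L_{1,t}$, the same Taylor expansion of $\psi_t$ truncated at the quartic term, the same linearization $\sin u\,dz\sim \tfrac{\ii\sqrt t}{4}\,dy$, the same argument that $\phi_t\to 1$ uniformly, and the same reduction to the Gaussian integral $\int_{-\infty}^{\infty}e^{\ii vy/4-y^2/48}\,dy$. The only organizational difference is that the paper keeps the $t$-dependent coefficients $\tfrac{(2n+\theta)t-4}{8\sqrt t}$ and $\tfrac{(2n+\theta)t}{12\cdot 16}$ explicit and passes to the limit at the very end via dominated convergence, whereas you substitute their limiting values $v/4$ and $1/48$ immediately and absorb the discrepancy into the uniform $o(1)$; both are equivalent once uniformity on $0\le y\le(\log 1/t)^2$ is established, which you correctly flag as the point needing care.
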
 

Thus, we have our main theorem.
\begin{thm}
Let $n=\left[\frac{2+\sqrt{t}v}{t}\right]$. As $t\to0$, we have
$$
\p\left(D_{t}=\left[\frac{2+\sqrt{t}v}{t}\right]\right)=d_{n}^{\theta}(t)\sim \sqrt{\frac{3}{2}}\frac{\sqrt{t}}{\sqrt{2\pi}} e^{-\frac{3v^3}{4}}.
$$
\end{thm}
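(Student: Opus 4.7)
The plan is to combine the integral representation of Theorem~\ref{arep} with Lemmas~\ref{es} and~\ref{con}, and then apply Stirling's formula to the binomial prefactor. I would first use the identity
$$d_{n}^{\theta}(t)=\binom{2n-1+\theta}{n}\frac{c_{n}^{\theta}(t)}{2^{2n-1+\theta}}$$
and pass to the half-circle form~\eqref{eqq} for $c_{n}^{\theta}(t)$. The contour $C_{+}$ is then deformed onto the steep descent paths $L_{1}$ and $L_{2}$ of Figure~\ref{stt}; the $w=-\overline{z}$ correspondence identifies the two pieces and leaves a single integral over $L_{1}$.

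Next I would split $L_{1}=L_{1,t}\cup L_{1,t}^{c}$ across the disc $C_{t}(-\pi)$ of radius $t^{1/4}\log(1/t)$. Lemma~\ref{es} bounds the far piece $L_{1,t}^{c}$ by $M_{1}\exp(-M_{2}\log^{4}(1/t))$, which is super-polynomially smaller than the $\sqrt{t}$ scale of the eventual answer and may be discarded. Lemma~\ref{con} provides the near contribution. Since $\prod_{k=1}^{\infty}[1-e^{-4k\pi^{2}/t}]\to 1$ and $e^{(1-\theta)^{2}t/8}\to 1$ as $t\to 0$, substituting back into~\eqref{eqq} gives
$$c_{n}^{\theta}(t)\sim\frac{\ii}{\sqrt{2\pi t}}\left(-\ii\frac{2\sqrt{6}\sqrt{2\pi}}{4}\right)e^{-\frac{3v^{3}}{4}}\sqrt{t}=\frac{\sqrt{6}}{2}\,e^{-\frac{3v^{3}}{4}}.$$

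With $n=\lfloor(2+\sqrt{t}v)/t\rfloor\sim 2/t$, Stirling's formula applied to $\Gamma(2n+\theta)/(\Gamma(n+1)\Gamma(n+\theta))$ yields
$$\frac{\binom{2n-1+\theta}{n}}{2^{2n-1+\theta}}\sim\frac{1}{\sqrt{\pi n}}\sim\sqrt{\frac{t}{2\pi}}.$$
Multiplying this by the asymptotic for $c_{n}^{\theta}(t)$ and using $\sqrt{6}/2=\sqrt{3/2}$ produces the announced
$$d_{n}^{\theta}(t)\sim\sqrt{\frac{3}{2}}\,\frac{\sqrt{t}}{\sqrt{2\pi}}\,e^{-\frac{3v^{3}}{4}}.$$

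The real analytic difficulty is concentrated in Lemma~\ref{con}, where the expansion of $\psi_{t}(z)$ near the saddle $z=0$ must be done with the scaling $z=-\pi+t^{1/4}e^{\ii\pi/4}\sqrt{y}$ on $L_{1,t}$ in mind. The quadratic coefficient $-2+(2n+\theta)t/2\sim v\sqrt{t}$ is the sole channel through which the centering parameter $v$ enters $\psi_{t}$, and it is the balancing of this vanishing-quadratic piece against the quartic correction from~\eqref{ee} that generates the $v^{3}$ factor in the exponent. Modulo Lemmas~\ref{es} and~\ref{con}, the rest of the argument is essentially bookkeeping: collecting the constants $\sqrt{6}/2=\sqrt{3/2}$, checking that the cancellation of $\ii$-factors works, and verifying that the $\theta$-dependent prefactors $e^{(1-\theta)^{2}t/8}$ and the Jacobi product are $1+o(1)$ at the precision required.
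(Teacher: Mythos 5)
Your proposal follows the paper's proof essentially verbatim: Theorem \ref{arep} together with the half-circle form \eqref{eqq}, the contour deformation onto $L_{1}$, Lemmas \ref{es} and \ref{con} for the integral asymptotics, and Stirling's formula for the binomial prefactor. Your bookkeeping is in fact internally consistent where the paper's is not quite: the paper's displayed intermediate asymptotic for $\int_{C}(K_{t}(z)-K_{t}(-z))\phi_{t}(z)\,dz$ actually records the value of $\int_{C_{+}}$ (a stray factor of $2$, compensated by using the $\tfrac{\ii}{2\sqrt{2\pi t}}$ normalization instead of \eqref{eqq}), and its last line has a typo ($v^{2}$ for $v^{3}$), whereas your route through \eqref{eqq} lands on the stated constant $\sqrt{3/2}$ and exponent $e^{-3v^{3}/4}$ directly.
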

\begin{proof}
By Theorem \ref{arep}, we know 
$$
d_{n}^{\theta}(t)=\p(D_{t}=n)=\binom{2n-1+\theta}{n}\frac{c_{n}^{\theta}(t)}{2^{2n-1+\theta}} 
$$
where 
$$
c_{n}^{\theta}(t)=\prod_{k=1}^{\infty}[1-e^{\frac{-4k\pi^2}{t}}]e^{\frac{(1-\theta)^2t}{8}}
\frac{\ii}{2\sqrt{2\pi t}}\int_{C}(K_{t}(z)-K_{t}(-z))\phi_{t}(z)dz.
$$ 
By Stirling's formula, we know as $n\to\infty$
$$
\binom{2n-1+\theta}{n}\frac{1}{2^{2n-1+\theta}} \sim \frac{1}{\sqrt{2\pi}}\sqrt{\frac{2}{n}}.
$$
Due to Lemma \ref{es} and Lemma \ref{con} and equation (\ref{eqq}), one can see that 
$$
\int_{C}(K_{t}(z)-K_{t}(-z))\phi_{t}(z)dz\sim -\frac{\sqrt{t}\ii}{4}2\sqrt{6}\sqrt{2\pi}e^{-\frac{3v^3}{4}}.
$$
Therefore, 
$$
d_{n}^{\theta}(t)\sim \sqrt{\frac{3}{2}}\frac{\sqrt{t}}{\sqrt{2\pi}} e^{-\frac{3v^2}{4}}.
$$
\end{proof}
\section{Proofs of Lemma \ref{es} and Lemma \ref{con}}

\subsection{Proof of Lemma \ref{es}}
\begin{proof}
The path $L_{1,t}^{c}$ has two parts
\begin{align*}
\mathrm{I}: &z(r)=-\pi+re^{\ii\frac{\pi}{4}}, t^{\frac{1}{4}}\log\frac{1}{t} \leq r\leq 2\pi,\\
\mathrm{II}:& z(y)=\pi+iy, 2\pi\leq y\leq 0.
\end{align*}
On $\mathrm{II}$,  
$$
K_{t}(z(y))\phi_{t}(z(y))=-\ii\exp\left\{\frac{1}{t}\psi_{t}(z(y))\right\}\sinh\frac{y}{2}\phi_{t}(z(y)).
$$
where $\psi_{t}(z(y))=\frac{y^2}{2}-(2n+\theta)t\log\cosh\frac{y}{2}$. Then
\begin{align*}
&-\int_{\mathrm{II}}K_{t}(z)\phi_{t}(z)dz+\int_{\mathrm{II}}K_{t}(\overline{z})\phi_{t}(\overline{z})d\overline{z}\\
=&\int_{0}^{2\pi}\exp\left\{\frac{1}{t}\psi_{t}(z(y))\right\}\sinh\frac{y}{2}\phi_{t}(z(y))dy-\int_{0}^{2\pi}\exp\left\{\frac{1}{t}\psi_{t}(z(y))\right\}\sinh\frac{y}{2}\phi_{t}(\overline{z(y)})dy\\
=&\int_{0}^{2\pi}\exp\left\{\frac{1}{t}\psi_{t}(z(y))\right\}\sinh\frac{y}{2}\mathrm{Im}[\phi_{t}(z(y))]dy\end{align*}
By the monotonicity of $\psi_{t}(z(y))$, we know that  
\begin{align*}
&\frac{1}{t}\psi_{t}(z(y))\leq \frac{1}{t}\max\left\{\psi_{t}(z(2\pi)),\psi_{t}(z(0))\right\}\\
\leq& \max\left\{\frac{2\pi^2}{t}-(2n+\theta)\log(1+\frac{\pi^2}{2}),0\right\}\leq \max\left\{\frac{2\pi^2}{t}-\frac{(2n+\theta)t}{2}\frac{\pi^2}{t},0\right\}
\end{align*}
and 
$
\phi_{t}(z(y))=1+(-1)^{\theta-1}2\cosh\frac{2\pi z(y)}{t}+o(e^{-\frac{4\pi^2}{t}}).
$
Thus,
$$
|\mathrm{Im}(\phi_{t}(z(y)))|=\left|\mathrm{Im}\left[(-1)^{\theta-1}2\cosh(\frac{2\pi z(y)}{t})e^{-\frac{4\pi^2}{t}}\right]\right|\leq M_{1} e^{-\frac{2\pi^2}{t}}
$$
So 
$$
\exp\left\{\frac{1}{t}\psi_{t}(z(y))\right\}\mathrm{Im}[\phi_{t}(z(y))]\leq \exp\left\{-\frac{(2n+\theta)t}{2}\frac{\pi^2}{t}\right\}\leq \exp\left\{-M_{2}\frac{\pi^2}{t}\right\}
$$
and 
$$
\left|-\int_{\mathrm{II}}K_{t}(z)\phi_{t}(z)dz+\int_{\mathrm{II}}K_{t}(\overline{z})\phi_{t}(\overline{z})d\overline{z}\right|\leq M_{1}\left[\int_{0}^{2\pi}\sinh(\frac{y}{2})dy\right] e^{-M_{2}\frac{\pi^2}{t}}.
$$
On part $\mathrm{I}$, we know $|K_{t}(z(r))|=\exp\{\frac{1}{t}\mathrm{Re}[\psi_{t}(z(r))]\}|\sin\frac{r}{2}e^{\ii\frac{\pi}{4}}|$, $|\sin\frac{r}{2}e^{\ii\frac{\pi}{4}}|\leq e^{\frac{\pi}{\sqrt{2}}}$ and $\mathrm{Re}[\psi_{t}(z(r))]$ is decreasing in $r$, then
$$
|K_{t}(z(r))|\leq \exp\left\{\frac{1}{t}\mathrm{Re}[\psi_{t}(z(t^{\frac{1}{4}}\log\frac{1}{t}))]\right\}e^{\frac{\pi}{\sqrt{2}}}.
$$
It is not difficult to show that as $t\to0$
$$
\frac{1}{t}\mathrm{Re}[\psi_{t}(z(t^{\frac{1}{4}}\log\frac{1}{t}))]=-\frac{(2n+\theta)}{2}\log\frac{\cosh(t^{\frac{1}{4}}\log\frac{1}{t})) +\cos(t^{\frac{1}{4}}\log\frac{1}{t})) }{2}\sim -\frac{1}{12}\log^4\frac{1}{t}.
$$
Therefore, there exists $\delta>0$ such that 
$$
\frac{1}{t}\mathrm{Re}[\psi_{t}(z(t^{\frac{1}{4}}\log\frac{1}{t}))]\leq-\frac{1}{24}\log^4\frac{1}{4},\quad \forall 0<t<\delta.
$$
Moreover, $|\phi_{t}(z(r))|\leq \exp\{\sum_{k=1}^{\infty}\log(1+2\cosh\frac{2\pi z(r)}{t}e^{-\frac{4\pi^2k}{t}}+e^{-\frac{8\pi^2 k}{t}})\}$, then by inequality $\log(1+x)<x,\forall x>0$, one can show that 
$$
|\phi_{t}(z(r))|\leq M_{1}
$$
Thus,
\begin{align*}
&\left|-\int_{\mathrm{I}}K_{t}(z)\phi_{t}(z)dz+\int_{\mathrm{I}}K_{t}(\overline{z})\phi_{t}(\overline{z})d\overline{z}\right|\\
\leq& 4\pi\sqrt{2}|K_{t}(z)\phi_{t}(z)|\leq M_{1}\exp\left\{-\frac{1}{24}\log^4\frac{1}{t}\right\}. 
\end{align*}
So 
$$
\left|-\int_{L_{1,t}^c}K_{t}(z)\phi_{t}(z)dz+\int_{L_{1,t}^c}K_{t}(\overline{z})\phi_{t}(\overline{z})d\overline{z}\right|\leq M_{1}\exp\left\{-M_{2}\log^4\frac{1}{t}\right\},\quad M_{1},M_2>0.
$$
\end{proof}

\subsection {Proof of Lemma \ref{con}}
\begin{proof}

Consider the following parametrization:
$$
L_{1,t}:  z(y)=-\pi+t^{\frac{1}{4}}e^{\ii\frac{\pi}{4}} \sqrt{y},\quad 0\leq y\leq (\log\frac{1}{t})^2.
$$ 
Then by the Taylor expansion of $\psi_{t}(z)$ in (\ref{ee}), one can show that there exists $\delta>0$ such that 
$$
\frac{1}{t}\psi_{t}(\frac{z(y)+\pi}{2})=\frac{(2n+\theta)t-4}{8\sqrt{t}}y\ii-\frac{(2n+\theta)t}{12\times 2^4}y^2+\Delta_{1,t}(y)
$$
$$
\frac{1}{t}\psi_{t}(\frac{\pi+\overline{z(y)}}{2})=-\frac{(2n+\theta)t-4}{8\sqrt{t}}y\ii-\frac{(2n+\theta)t}{12\times 2^4}y^2+\Delta_{2,t}(y)
$$
where $|\Delta_{1,t}(y)|,|\Delta_{2,t}(y)|\leq M \sqrt{t}\log^6\frac{1}{t},\forall t<\delta,$ and $M$ is independent of $t$.  Thus, if we denote 
$$
\epsilon_{1,t}(y)=\phi_{t}(z(y))\exp\{\Delta_{1,t}(y)\}\frac{\sin(t^{\frac{1}{4}}\sqrt{y}e^{\frac{\pi}{4}\ii})}{2\sqrt{y}}t^{\frac{1}{4}}e^{\frac{\pi}{4}\ii}
$$
$$
\epsilon_{2,t}(y)=\phi_{t}(\overline{z(y)})\exp\{\Delta_{2,t}(y)\}\frac{\sin(t^{\frac{1}{4}}\sqrt{y}e^{-\frac{\pi}{4}\ii})}{2\sqrt{y}}t^{\frac{1}{4}}e^{-\frac{\pi}{4}\ii}
$$
then it is not difficult to show that as $t\to0$ the following results hold uniformly for $0<y<\log^2\frac{1}{t}$
\begin{equation}
\epsilon_{1,t}(y)\sim \frac{\sqrt{t}\ii}{4},\quad \epsilon_{2,t}(y)\sim -\frac{\sqrt{t}\ii}{4},\quad \phi_{t}(z(y))\sim 1,\quad \phi_{t}(\overline{z(y)})\label{eqv}
\sim 1.
\end{equation}
Therefore,
\begin{align*}
&-\int_{L_{1,t}}K_{t}(z)\phi_{t}(z)dz+\int_{L_{1,t}}K_{t}(\overline{z})\phi_{t}(\overline{z})d\overline{z}\\
=& -\int_{0}^{\log^2\frac{1}{t}}\exp\left\{\frac{(2n+\theta)t-4}{8\sqrt{t}}y\ii-\frac{(2n+\theta)t}{12\times 2^4}y^2\right\}\epsilon_{1,t}(y)dy\\
&+\int_{0}^{\log^2\frac{1}{t}}\exp\left\{-\frac{(2n+\theta)t-4}{8\sqrt{t}}y\ii-\frac{(2n+\theta)t}{12\times 2^4}y^2\right\}\epsilon_{2,t}(y)dy\\
=& -\frac{\sqrt{t}\ii}{4}\int_{0}^{\log^2\frac{1}{t}}\exp\left\{\frac{(2n+\theta)t-4}{8\sqrt{t}}y\ii-\frac{(2n+\theta)t}{12\times 2^4}y^2\right\}dy\\
&-\frac{\sqrt{t}\ii}{4}\int_{0}^{\log^2\frac{1}{t}}\exp\left\{-\frac{(2n+\theta)t-4}{8\sqrt{t}}y\ii-\frac{(2n+\theta)t}{12\times 2^4}y^2\right\}dy\\
&-\int_{0}^{\log^2\frac{1}{t}}\exp\left\{\frac{(2n+\theta)t-4}{8\sqrt{t}}y\ii-\frac{(2n+\theta)t}{12\times 2^4}y^2\right\}\left[\epsilon_{1,t}(y)-\frac{\sqrt{t}\ii}{4}\right]dy\\
&+\int_{0}^{\log^2\frac{1}{t}}\exp\left\{-\frac{(2n+\theta)t-4}{8\sqrt{t}}y\ii-\frac{(2n+\theta)t}{12\times 2^4}y^2\right\}\left[\epsilon_{2,t}(y)+\frac{\sqrt{t}\ii}{4}\right]dy\\
=&-\frac{\sqrt{t}\ii}{4}\int_{-\infty}^{\infty}\exp\left\{\frac{(2n+\theta)t-4}{8\sqrt{t}}y\ii-\frac{(2n+\theta)t}{12\times 2^4}y^2\right\}dy+R(t)
\end{align*}
where 
\begin{align*}
R(t)=&-\frac{\sqrt{t}\ii}{4}\int_{\log^2\frac{1}{t}}^{\infty}\exp\left\{\frac{(2n+\theta)t-4}{8\sqrt{t}}y\ii-\frac{(2n+\theta)t}{12\times 2^4}y^2\right\}dy\\
&-\frac{\sqrt{t}\ii}{4}\int_{\log^2\frac{1}{t}}^{\infty}\exp\left\{-\frac{(2n+\theta)t-4}{8\sqrt{t}}y\ii-\frac{(2n+\theta)t}{12\times 2^4}y^2\right\}dy\\
&-\int_{0}^{\log^2\frac{1}{t}}\exp\left\{\frac{(2n+\theta)t-4}{8\sqrt{t}}y\ii-\frac{(2n+\theta)t}{12\times 2^4}y^2\right\}\left[\epsilon_{1,t}(y)-\frac{\sqrt{t}\ii}{4}\right]dy\\
&+\int_{0}^{\log^2\frac{1}{t}}\exp\left\{-\frac{(2n+\theta)t-4}{8\sqrt{t}}y\ii-\frac{(2n+\theta)t}{12\times 2^4}y^2\right\}\left[\epsilon_{2,t}(y)+\frac{\sqrt{t}\ii}{4}\right]dy
\end{align*}
One can easily show that 
\begin{align*}
|R(t)|\leq& M_{1}\sqrt{t}\exp\{-M_{2}\log^4\frac{1}{t}\}+ \frac{\sqrt{t}}{4}\max_{0<y<\log^2\frac{1}{t}}|\frac{\epsilon_{1,t}(y)}{\sqrt{t}\ii/4}-1|+ \frac{\sqrt{t}}{4}\max_{0<y<\log^2\frac{1}{t}}|\frac{\epsilon_{2,t}(y)}{\sqrt{t}\ii/4}+1|. 
\end{align*}
Due to uniform convergence in (\ref{eqv}), we know $\lim_{t\to0}\frac{R(t)}{\sqrt{t}}=0$.

Because $n=[\frac{2+\sqrt{t}v}{t}]$, then $\lim_{t\to0}\frac{(2n+\theta)t-4}{8\sqrt{t}}=\frac{v}{4}$ and $\lim_{t\to0}(2n+\theta)t=4$. By Lebesgue's dominant convergent theorem, we can show that 
\begin{align*}
&\lim_{t\to0}\int_{-\infty}^{\infty}\exp\left\{\frac{(2n+\theta)t-4}{8\sqrt{t}}y\ii-\frac{(2n+\theta)t}{12\times 2^4}y^2\right\}dy
=\int_{-\infty}^{\infty}\exp\left\{v\frac{y}{4}\ii-\frac{1}{3\times 2^4}y^2\right\}dy
\end{align*}
Then by Fourier transformation, we know 
$$
\int_{-\infty}^{\infty}\exp\left\{v\frac{y}{4}\ii-\frac{1}{3\times 2^4}y^2\right\}dy=2\sqrt{6}\sqrt{2\pi}e^{-\frac{3v^3}{4}}. 
$$
So 
$$
-\int_{L_{1,t}}K_{t}(z)\phi_{t}(z)dz+\int_{L_{1,t}}K_{t}(\overline{z})\phi_{t}(\overline{z})d\overline{z}\sim -\ii\frac{ 2\sqrt{6}\sqrt{2\pi}}{4}e^{-\frac{3v^3}{4}}\sqrt{t}. 
$$
\end{proof}

\section{Conclusion Remarks}
The integral representation of probabilities $d_{n}^{\theta}(t)$ has other possible applications such as large deviations and moderate deviations for $D_{t}$ at small time regime. In proof of local central limit theorem, we use a steep descent path, which is actually not the steepest descent path. The steepest descent path is actually given by $\mathrm{Im}(\psi_{t}(z))=0$ otherwise the oscillation of the imaginary part will produce many cancellations. It will eventually prevent us from getting precise estimation.  But for the proof of local central limit theorem, our path is good enough. If one can carefully handle (\ref{eqv}), it is possible to get some asymptotic expansion.

 \bibliography{BS}{}
\bibliographystyle{plain} 
 
\end{document}